\documentclass[12pt, reqno, a4paper]{amsart}
\usepackage{times}

\usepackage[utf8]{inputenc}
\usepackage[USenglish]{babel}
\usepackage{amsmath,amsthm,amssymb,amsfonts}
\usepackage{mathtools}
\usepackage{mathrsfs}
\usepackage{bbm}
\usepackage{booktabs}

\usepackage{indentfirst}
\usepackage{enumitem}
\usepackage{xcolor}

\usepackage{float}

\usepackage[breaklinks=true, bookmarksopenlevel=1, bookmarksdepth=2]{hyperref}
\hypersetup{
colorlinks,
   linkcolor={cyan!80!black},
   citecolor={cyan!80!black},
 urlcolor={cyan!80!black}
}

\allowdisplaybreaks

\newtheorem{thm}{Theorem}[section]

\newtheorem{lem}[thm]{Lemma}
\newtheorem{prop}[thm]{Proposition}

\theoremstyle{plain} % just in case the style had changed
\newcommand{\thistheoremname}{}
\newtheorem*{genericthm}{\thistheoremname}

\theoremstyle{definition}

\theoremstyle{remark}
\newtheorem{rem}[thm]{Remark}

\numberwithin{equation}{section}

\newcommand{\N}{\mathbb{N}}      % N = Naturals
\newcommand{\eps}{\varepsilon}   % epsilon

\usepackage[margin=1in]{geometry}

\restylefloat{table}

\setlist[itemize]{leftmargin=*}
\setlist[enumerate]{leftmargin=*}

\begin{document}

\title{On Vu's restricted box estimate in Waring's problem}%

\author{Christian T\'afula}%
\address{Institute of Mathematics, Statistics\\
and Computer Science\\
University of S\~ao Paulo\\
Rua do Mat\~ao, 1010\\
S\~ao Paulo, SP 05508-090\\
Brazil}
\curraddr{}
\email{tafula@ime.usp.br}
\thanks{}

\subjclass[2020]{11P05, 11P55}
\keywords{Waring's problem, Hardy--Littlewood method, Hua's lemma}

% ----------------------------------------------------------------
\begin{abstract}
 In 2000, Vu proved that the number of solutions of $x_1^k + \cdots + x_s^k = N$ in an arbitrary box satisfies the expected Hardy--Littlewood upper bound with a power-saving error term, for $s \geq O(8^k k^3)$. We show that one may take $s\geq k^2 - k + O(\sqrt{k})$.
\end{abstract}

\maketitle
% ----------------------------------------------------------------

\section{Introduction}
 Let $k \geq 2$. In his work on thin subbases for Waring's problem, Vu \cite{vvu00wp} proved that, for $s$ sufficiently large in terms of $k$, there exists $A \subseteq \{n^k : n\in\N\}$ such that the number of representations of $N$ as a sum of $s$ elements of $A$ is $\asymp \log N$. The main number-theoretic input in Vu's argument is a restricted version of Waring's problem, estimating the number of solutions to the equation $x_1^k+\cdots+x_s^k=N$ inside an arbitrary box.

 For positive integers $P_1,\ldots,P_s$, write $\mathrm{Root}(P_1,\ldots,P_s;N)$ for the number of solutions of $x_1^k+\cdots+x_s^k=N$ with $1 \leq x_j \leq P_j$. Vu proved that there exists $s_0(k)$ such that, whenever $s \geq s_0(k)$, there is some $\delta=\delta(k,s)>0$ for which
 \begin{equation}
  \mathrm{Root}(P_1,\ldots,P_s;N) \ll N^{-1}\prod_{j=1}^s P_j+\bigg(\prod_{j=1}^s P_j\bigg)^{1-k/s-\delta} \label{main-est}
 \end{equation}
 uniformly in $N$ and in the side lengths $P_1,\ldots,P_s$. Vu's argument gives $s_0(k) \ll 8^k k^3$.\footnote{We note that the statement in \cite{vvu00wp} gives $8^k k^2$, but the computation in the proof gives the additional factor of $k$.}

 Vu's thin subbasis theorem was later sharpened by Wooley \cite{woo03}, who bypassed this restricted box estimate and obtained a much stronger threshold for the thin basis problem itself. More recently, Pliego \cite{pli24} obtained further refinements. Thus, from the point of view of thin Waring subbases, Vu's original lemma is no longer the sharpest available route. The purpose of this note is different: we return to Vu's restricted box estimate itself and improve its threshold directly.

 We shall use recent progress on Vinogradov's mean value theorem. Let
 \[ J_{s,k}(X) := \int_{[0,1)^k}\bigg|\sum_{1 \leq x \leq X} e(\alpha_1 x + \alpha_2 x^2 + \cdots + \alpha_k x^k)\bigg|^{2s}\, \mathrm{d}\boldsymbol{\alpha}. \]
 The main conjecture in Vinogradov's mean value theorem asserts that, for every $\eps>0$,
 \[ J_{s,k}(X) \ll_{\eps,s,k} X^\eps (X^s+X^{2s-k(k+1)/2}). \]
 The cubic case was obtained by Wooley via efficient congruencing \cite{woo12}, and the conjecture was subsequently proved in general by Bourgain, Demeter and Guth \cite{boudemgut16} and by Wooley through nested efficient congruencing \cite{woo19}.

 The particular input we need is Hua's lemma. The classical form of the lemma gives
 \begin{equation}
  \int_0^1 \bigg|\sum_{x \leq X} e(\alpha x^k)\bigg|^s \mathrm{d}\alpha \ll_\eps X^{s-k+\eps} \label{hua-est}
 \end{equation}
 for every integer $s \geq 2^k$. For large $k$, we use the sharper estimates of Wooley \cite{woo19}. Refining work of Bourgain \cite{bou17}, Wooley proved that the same estimate holds for every real number
 \[ s \geq k^2-k+2\lfloor \sqrt{2k+2}\rfloor-\theta(k), \]
 where
 \[ \theta(k) := \begin{cases}
                  1, & 2k+2 \leq \lfloor \sqrt{2k+2}\rfloor^2+\lfloor \sqrt{2k+2}\rfloor,\\
                  2, & 2k+2 > \lfloor \sqrt{2k+2}\rfloor^2+\lfloor \sqrt{2k+2}\rfloor.
                 \end{cases} \]
 This gives an explicit version of the bound $k^2-k+O(\sqrt{k})$. Comparing this with Hua's classical exponent $2^k$, one sees that Hua's lemma is better for $k=2$ and $k=4$, the two estimates coincide for $k=3$, and Wooley's estimate is better for every $k \geq 5$.

 Our main result is the following:

 \begin{thm}\label{main-thm}
  Let $k \geq 2$, and let $s$ be an integer satisfying
  \[ s \geq \begin{cases}
          2^k + 1, & 2 \leq k \leq 4,\\
          k^2-k+2\lfloor \sqrt{2k+2}\rfloor-\theta(k) + 1, & k \geq 5.
         \end{cases} \]
  Then \eqref{main-est} holds for some $\delta=\delta(k,s)>0$.
 \end{thm}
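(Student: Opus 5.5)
We would prove Theorem~\ref{main-thm} by the circle method, using the bounded-exponent form of Hua's lemma \eqref{hua-est} as the only mean-value input. Write $t$ for the Hua threshold, namely $2^k$ for $2\le k\le 4$ and $k^2-k+2\lfloor\sqrt{2k+2}\rfloor-\theta(k)$ for $k\ge5$, so that $s\ge t+1$. Put $f_j(\alpha)=\sum_{x\le P_j}e(\alpha x^k)$, so that $\mathrm{Root}(P_1,\ldots,P_s;N)=\int_0^1\prod_j f_j(\alpha)e(-N\alpha)\,\mathrm{d}\alpha$. By symmetry we may assume $P_1\ge\cdots\ge P_s$, and we set $Q=\prod_{j<s}P_j$.

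\emph{Step 1 (Hölder bound).} For any $r\ge t$ and any $r$ of the $f_j$, Hölder's inequality with exponents all equal to $r$, combined with \eqref{hua-est}, gives
\[
\int_0^1\prod_{j\le r}|f_j|\,\mathrm{d}\alpha\le\prod_{j\le r}\Bigl(\int_0^1|f_j|^r\,\mathrm{d}\alpha\Bigr)^{1/r}\ll_\eps\prod_{j\le r}P_j^{1-k/r+\eps}.
\]
We will apply this with $r=s-1\ge t$. This is the only place where the hypothesis on $s$ enters.

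\emph{Step 2 (unbalanced boxes).} Suppose $P_s\le Q^{1/(s-1)-c}$ for a small constant $c>0$. Fix $x_s$ and count the remaining variables using Step~1 with $r=s-1$. This gives
\[
\mathrm{Root}\le P_s\,Q^{1-k/(s-1)+\eps}.
\]
We need this to be at most $(P_sQ)^{1-k/s-\delta}$, which is equivalent to
\[
P_s^{k/s+\delta}\le Q^{k/(s(s-1))-\eps-\delta}.
\]
This holds once $\eps,\delta$ are small in terms of $c$.

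\emph{Step 3 (balanced boxes).} Otherwise $P_s\ge Q^{1/(s-1)-c}$. Since the $P_j$ are ordered, this forces $P_s\ge P_1^{\kappa}$ for some $\kappa=\kappa(k,s)>0$, so all the $P_j$ are powers of one another with bounded exponents. We may also assume $N\le sP_1^k$, since otherwise $\mathrm{Root}=0$.

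We dissect $[0,1)$ into major and minor arcs at the scale of the smallest box: $\mathfrak M$ is the union of the arcs $|q\alpha-a|\le P_s^{\nu-k}$ with $q\le P_s^{\nu}$, for a small $\nu>0$.

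On $\mathfrak m$, Weyl's inequality (or a Vinogradov-type bound) gives $\sup_{\mathfrak m}|f_s|\ll P_s^{1-\sigma}$ for some $\sigma=\sigma(k,\nu)>0$. Combining this with Step~1 for $j<s$ yields
\[
\int_{\mathfrak m}\prod_j|f_j|\,\mathrm{d}\alpha\ll P_s^{1-\sigma}\,Q^{1-k/(s-1)+\eps}.
\]
Using $P_s^{k/s}\le Q^{k/(s(s-1))}$, which holds because $P_s^{s-1}\le Q$, this is $\ll(P_sQ)^{1-k/s}P_s^{-\sigma+\eps'}$. Since $P_s\ge(P_sQ)^{\kappa'}$, we obtain a power saving $\delta$.

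On $\mathfrak M$ we use the standard approximation
\[
f_j(\alpha)=q^{-1}S(q,a)\,v_j(\beta)+O(q+P_j^{k}|\beta|\,q),\qquad v_j(\beta)=\int_0^{P_j}e(\beta\gamma^k)\,\mathrm{d}\gamma.
\]
This produces the main term $\mathfrak S(N)\,J(N)$ plus an admissible error, since $\nu$ is small and the $P_j$ are comparable. The singular series is $\ll1$ because $s\ge 2k+1$. For the singular integral we bound $J(N)\ll N^{-1}\prod_jP_j$. This follows either from the bound $|v_j(\beta)|\ll\min(P_j,|\beta|^{-1/k})$, or directly by interpreting $J(N)$ as the density at $N$ of $\sum_j\gamma_j^k$ over the real box: we change variables $u=\gamma_1^k$ in the largest coordinate and note that $N\asymp\max_j P_j^k$ whenever the density is nonzero. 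Hence the major arcs contribute $\ll N^{-1}\prod_jP_j$ plus a term with a power saving, giving \eqref{main-est}.

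The main obstacle is this major-arc step with unequal side lengths. The arcs must be defined at the scale of $P_s$ so that the Weyl saving on $\mathfrak m$ is real. At the same time, the approximation errors for the larger $f_j$, and the tails $|\beta|>P_s^{\nu-k}$ of the singular integral, must still be controlled. We would handle both by the pruning-type bound $\int_{\mathfrak M}|f_1\cdots f_{s-1}|$ together with the balancing condition $P_s\ge P_1^\kappa$. Throughout, $\nu,c,\eps$ are chosen small in terms of $k,s$, and $\delta$ is taken last.
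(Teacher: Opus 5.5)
Your Steps 1--2 and the minor-arc bound are sound. Applying H\"older directly to sums of unequal lengths is leaner than the paper's dyadic Lemma~\ref{weighted-tail}. With arcs at scale $P_s$, the singular-integral tails are also harmless, via $|v_j(\beta)|\ll|\beta|^{-1/k}$ and $P_s^{s-k}\le P^{1-k/s}$.

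\textbf{The major-arc approximation.} This is the step you flag but do not resolve, and as written it fails. You extract only $P_s\ge P_1^{\kappa}$ for some $\kappa>0$. But on $\mathfrak M$ the error in $f_j$ is $q+qP_j^k|\beta|$, which can be as large as $P_s^{\nu}+P_j^kP_s^{\nu-k}$. This exceeds even the trivial bound $P_j$ once $\kappa<(k-1)/(k-\nu)$. After multiplying by $P/P_1$ and by $\mathrm{meas}(\mathfrak M)\asymp P_s^{2\nu-k}$, the error is $P\,P_1^{k-1}P_s^{3\nu-2k}$. This is admissible in general only if $\kappa$ exceeds roughly $1-\frac{1}{2k}$, so ``bounded exponents'' is not enough.

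Pruning does not substitute for this. Any absolute-value bound on $\mathfrak M$ loses the main term (see the next part). Using Weyl for $f_1$ off the scale-$P_1$ arcs gains $P_1^{-\sigma}$ but loses $(P_1/P^{1/s})^{k/(s-1)}$ against $P^{1-k/s}$, which again needs nearly equal sides.

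The missing idea is quantitative. From $P_s\ge Q^{1/(s-1)-c}$ and $Q\ge P_1P_s^{s-2}$ you actually get $\kappa=\frac{1-(s-1)c}{1+(s-1)(s-2)c}\to 1$ as $c\to0$. So you must take $c\lesssim 1/(ks^2)$, tying the balance parameter to the major-arc analysis. This is exactly the paper's choice $\tau=\lambda/s^2$ with $k\lambda<1/6$, which forces the smallest side to be at least $X^{1-\lambda}$.

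\textbf{The singular integral.} Of your two justifications of $J(N)\ll N^{-1}\prod_jP_j$, the first fails. The range $|\beta|\ge P_s^{-k}$ alone gives $\int\prod_j\min(P_j,|\beta|^{-1/k})\,\mathrm{d}\beta\gg P_s^{s-k}$. Take the balanced box $P_1=P_s^{1+\epsilon}$, $P_2=\cdots=P_s$, with $N=P_1^k$. Then $P_s^{s-k}$ exceeds both $N^{-1}P\asymp P_s^{s-k-(k-1)\epsilon}$ and $P^{1-k/s-\delta}$ once $\epsilon$ is small compared with $\delta$. In the balanced regime $N^{-1}P\gg P^{1-k/s}P_1^{-k(1-\kappa)}$, so no power loss in the main term can be absorbed, and the sharp geometric bound is essential.

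Your second route gives this bound, with one correction. After Fourier inversion, cover the slice $u_1+\cdots+u_s=N$ by the pieces where some $u_i\ge N/s$, and on each piece solve for that coordinate, whose weight is then $u_i^{1/k-1}\ll N^{1/k-1}$, as in Lemma~\ref{sing-int}. Do not always use the coordinate of the longest side, where $u^{1/k-1}$ is unbounded near $u=0$.
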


 The proof follows the broad circle method structure of Vu's original argument, but with a different treatment of the minor arcs. Put
 \[ f_j(\alpha):=\sum_{x \leq P_j}e(\alpha x^k). \]
 By orthogonality,
 \begin{equation}
  \mathrm{Root}(P_1,\ldots,P_s;N)=\int_0^1 f_1(\alpha)\cdots f_s(\alpha)\,e(-N\alpha)\,\mathrm{d}\alpha. \label{root-int}
 \end{equation}
 Vu's minor arc estimate applies pointwise Weyl bounds to a suitable collection of the sums $f_j$, and this is the step responsible for the exponential dependence on $k$. Instead, we reserve one variable for a pointwise minor arc saving and control the remaining $s-1$ variables by a Hua-type mean value estimate. This gives the threshold in Theorem \ref{main-thm}.

 The major arcs require a separate argument in the balanced case, where the usual major arc approximation gives the expected contribution $N^{-1}\prod_j P_j$, up to an acceptable power-saving error. The complementary unbalanced case is handled by a weighted consequence of the same Hua-type mean value estimate: after fixing the smallest variable, the remaining $s-1$ variables already give a power saving. Combining these estimates proves Theorem \ref{main-thm}.

\section{Proof of Theorem \ref{main-thm}}

\subsection{Setup}
 Set
 \[ H_k := \begin{cases}
             2^k, & 2 \leq k \leq 4,\\
             k^2-k+2\lfloor \sqrt{2k+2}\rfloor-\theta(k), & k \geq 5,
            \end{cases} \]
 so that \eqref{hua-est} holds for every integer $s\geq H_k$, and Theorem \ref{main-thm} assumes $s\geq H_k+1$. Throughout the proof, we shall use
 \[ K:=2^{k-1}, \qquad \lambda:=\frac{1}{12k}, \qquad \tau:=\frac{1}{12ks^2}. \]
 Thus
 \[ k\lambda<\frac{1}{6},\qquad \lambda+\frac{1}{3}<1,\qquad \frac{1}{6}-\lambda>0,\qquad \tau=\frac{\lambda}{s^2}. \]

 After reordering the side lengths, write
 \[ P_1\leq\cdots\leq P_s,\qquad P:=\prod_{j=1}^sP_j,\qquad X:=P_s, \]
 and use $f_j$ as in \eqref{root-int}. We may harmlessly assume that
 \[ (N/s)^{1/k}\leq X\leq N^{1/k}. \]
 Indeed, if $X<(N/s)^{1/k}$ then there are no solutions, while if $X>N^{1/k}$ we replace each $P_j$ by $\min(P_j,N^{1/k})$ and reorder. This does not change the set of solutions and only decreases $P$, so an estimate for the truncated box implies the same estimate for the original one.

%%%%%%%%%%%%%%%%%%%%%%%%%%%
\subsection{Balanced case}
 We first treat boxes whose side lengths are close to the largest one.

 \begin{prop}\label{balanced-box}
  Let $s\geq H_k+1$ be an integer, and suppose that $P_1\geq X^{1-\lambda}$. Then there exists $\delta_0=\delta_0(k,s)>0$ such that
  \[ \mathrm{Root}(P_1,\ldots,P_s;N) \ll N^{-1}P+P^{1-k/s-\delta_0}. \]
 \end{prop}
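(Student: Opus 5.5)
We run the circle method on \eqref{root-int}, using the balancing hypothesis $P_j\ge X^{1-\lambda}$ for every $j$. The major arcs $\mathfrak M$ are the union of the intervals $|\alpha-a/q|\le X^{\lambda-k}$ with $1\le q\le X^{\lambda}$ and $(a,q)=1$, and the minor arcs are $\mathfrak m=[0,1)\setminus\mathfrak M$. Since $X\asymp N^{1/k}$ and $P_j\ge X^{1-\lambda}$, we have $P\ge X^{s(1-\lambda)}$. Hence it suffices to show that the minor arcs contribute $\ll X^{s-k-\eta}$ for some $\eta>0$ with $X^{-\eta}\le P^{-\delta_0}X^{s\lambda}\cdots$ adjusted appropriately. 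More precisely, $P^{1-k/s-\delta_0}\ge X^{(s-k)(1-\lambda)-s\delta_0}$, so we need a saving of a fixed power of $X$ beyond $X^{s\lambda}$ losses. This is why the parameters $\lambda,\tau$ are tiny. I would instead compare against $N^{-1}P\gg X^{-k}P\ge X^{s-k-s\lambda}$ and aim for every error term to be $\ll X^{s-k-s\lambda-\tau'}$, or directly bounded by $P^{1-k/s-\delta_0}$.

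\textbf{Minor arcs.} Reserve the variable $f_s$, whose length is $X$. Take $\alpha\in\mathfrak m$. By Dirichlet's theorem we can write $|\alpha-a/q|\le q^{-1}X^{1-k}$ with $q\le X^{k-1}$, and then either $q>X^\lambda$ or $|q\alpha-a|>X^{\lambda-k}$. Weyl's inequality, in the form valid for large $q$ or large $|q\alpha - a|$, then gives $\sup_{\mathfrak m}|f_s|\ll X^{1-\lambda/K+\eps}$. For the remaining $s-1\ge H_k$ sums, Hölder's inequality together with \eqref{hua-est} gives
\[
\int_0^1\prod_{j<s}|f_j|\,\mathrm d\alpha\le\prod_{j<s}\Big(\int_0^1|f_j|^{s-1}\Big)^{1/(s-1)}\ll_\eps \prod_{j<s}P_j^{1-k/(s-1)+\eps}.
\]
Because each $P_j\le X$ and $P_j\ge X^{1-\lambda}$, this is at most $\big(\prod_{j<s}P_j\big)\,X^{-k(1-\lambda)+\eps}$. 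Multiplying by the Weyl bound, the minor arc contribution is at most
\[
P\,X^{-k+k\lambda-\lambda/K+2\eps}.
\]
This is where the tension lies: the factor $X^{k\lambda}$ produced by imbalance competes with the Weyl saving $X^{-\lambda/K}$, and $k\lambda>\lambda/K$. \emph{This is the main obstacle.} My first fix is to apply Hölder with weights adapted to the $P_j$. Each $\int|f_j|^{s-1}\ll P_j^{s-1-k+\eps}$, so the product bound is $\prod_{j<s}P_j^{1-k/(s-1)}$. Since $s-1\ge H_k$ we can use exponent $H_k$ on $s-2$ factors and keep one trivial bound, or else shrink the Weyl range. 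The cleaner alternative is to compare with $P^{1-k/s}$ rather than with $N^{-1}P$. We have $\prod_{j<s}P_j^{1-k/(s-1)}\cdot X=P\,\big(\prod_{j<s}P_j\big)^{-k/(s-1)}\le P\cdot P^{-k/s}\cdot X^{k/s}\cdots$. Writing $\prod_{j<s}P_j=P/X$, we get $(P/X)^{-k/(s-1)}=P^{-k/s}\cdot P^{-k/(s(s-1))}X^{k/(s-1)}$, and $P\ge X^{s(1-\lambda)}$ makes the extra factor at most $X^{k\lambda/(s-1)}$. So the minor arcs are bounded by $P^{1-k/s}X^{k\lambda/(s-1)-\lambda/K+2\eps}$. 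This is a genuine power saving provided $k/(s-1)<1/K$, which fails for large $k$. For large $k$ I would therefore replace the Weyl bound by Vinogradov-type minor arc bounds coming from the main conjecture, which give $|f_s|\ll X^{1-\sigma(k)}$ with $\sigma\asymp\lambda/k^2$ on $\mathfrak m$. I would then choose the major arc parameter as a multiple of $\tau=\lambda/s^2$ rather than $\lambda$, so that the imbalance loss $X^{k\tau}$-type is dominated. The constants $\tau=\lambda/s^2$ suggest exactly this reparametrization.

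\textbf{Major arcs.} On $\mathfrak M$ we approximate $f_j(\alpha)=q^{-1}S(q,a)v_j(\alpha-a/q)+O(q+X^{k}|\alpha - a/q|\cdots)$, where $v_j(\beta)=\int_0^{P_j}e(\beta t^k)\,\mathrm dt$. The errors total $O(X^{2\lambda'})$ relative to the main term, and since the major arcs have measure $X^{3\lambda'-k}$, the accumulated error is $\ll P X^{-k-\text{(small power)}}$. This relies on $P_j\ge X^{1-\lambda}$ being much larger than $X^{\lambda'}$ so that the errors $q\le X^{\lambda'}$ are relatively small. The singular integral $\int\prod_j v_j(\beta)e(-N\beta)\,\mathrm d\beta$ is the $(s-1)$-dimensional volume of the slice, which is $\ll P N^{-1}$. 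It is computed via the substitution $u_j=t_j^k$ and the bound $\min(P_j,(|\beta|)^{-1/k})$ on each $v_j$. The truncated singular series is $\ll 1$ because $s\ge 2k+1$, using the standard bound $S(q,a)\ll q^{1-1/k}$. Together these give $N^{-1}P$ plus an acceptable error, which completes the proof.
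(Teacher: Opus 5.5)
There is a genuine gap in your minor-arc estimate. You flag it yourself as the main obstacle, but you never establish it, and the fixes you propose do not close it. With your arrangement (sup bound on $f_s$, H\"older with exponent $s-1$ on $f_1,\dots,f_{s-1}$) you correctly reach $P^{1-k/s}X^{k\lambda/(s-1)-\sigma+2\eps}$, where $X^{-\sigma}$ is the pointwise saving on $\mathfrak m$.

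With major arcs of parameter $X^{\lambda}$ and Weyl's inequality, $\sigma=\lambda/K$. You then need $s-1>kK$. Since $H_k\le k2^{k-1}$ for every $k\ge 2$, this fails at $s=H_k+1$ for \emph{every} $k$, not only for large $k$. A Vinogradov-type bound gives $\sigma\asymp\lambda/k^2$, which is still much smaller than $k\lambda/(s-1)\asymp\lambda/k$ when $s\asymp k^2$.

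Shrinking the major arcs to a multiple of $\tau$ goes in the wrong direction. The loss $X^{k\lambda/(s-1)}$ is fixed by the hypothesis $P_1\ge X^{1-\lambda}$, while $\sigma$ is proportional to the major-arc parameter, so it only gets worse. Your arrangement could only be salvaged by a major-arc parameter that is large compared with $\lambda$, together with a Vinogradov-type Weyl bound for large $k$, which is the opposite of what you suggest.

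The missing idea is to let the longest sum take part in the mean value as well, applying H\"older with exponent $s$ to all $s$ factors:
\[
 \int_{\mathfrak m}\prod_{j=1}^s|f_j(\alpha)|\,\mathrm{d}\alpha\le\Big(\int_{\mathfrak m}|f_s(\alpha)|^s\,\mathrm{d}\alpha\Big)^{1/s}\prod_{j=1}^{s-1}\Big(\int_0^1|f_j(\alpha)|^s\,\mathrm{d}\alpha\Big)^{1/s}.
\]
Here $\int_0^1|f_j|^s\le P_j\int_0^1|f_j|^{s-1}\ll P_j^{s-k+\eta}$ because $s-1\ge H_k$. Likewise $\int_{\mathfrak m}|f_s|^s\le \sup_{\mathfrak m}|f_s|\int_0^1|f_s|^{s-1}\ll X^{s-k-\sigma+\eta}$. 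This yields $X^{-\sigma/s}P^{1-k/s+\eta/s}$ with no imbalance loss at all. Since $X=P_s\ge P^{1/s}$, the factor $X^{-\sigma/s}$ becomes a power saving in $P$.

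So the classical Weyl inequality and any fixed major-arc parameter suffice. The paper takes $q\le X^{1/6}$ and $\|\alpha-a/q\|\le X^{1/6-k}$, giving $\sigma=1/(12K)$, and the balance hypothesis is then needed only on the major arcs.

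One smaller point concerns your narrow arcs $|\beta|\le X^{\lambda-k}$. There the singular-integral tail cannot be handled via $|\beta|^{-1/k}\le P_j$, because you only get $|\beta|^{-1/k}\le X^{1-\lambda/k}$, which may exceed $X^{1-\lambda}$. The tail can still be bounded by H\"older, but your proposal does not address it.
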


 For the results in this subsection, we assume the hypotheses of Proposition \ref{balanced-box}. Define the major arcs by
 \[ \mathfrak{M} := \bigcup_{1\leq q\leq X^{1/6}} \bigcup_{\substack{1\leq a\leq q\\ (a,q)=1}} \mathfrak{M}(q,a),\qquad \mathfrak{M}(q,a) := \bigg\{\alpha\in[0,1]: \bigg\|\alpha-\frac{a}{q}\bigg\|\leq X^{1/6-k}\bigg\}, \]
 where $\|\cdot\|$ denotes distance to the nearest integer, and put $\mathfrak{m} :=[0,1]\setminus \mathfrak{M}$. For large $X$, these arcs are disjoint, since distinct fractions $a/q$ and $a'/q'$ with $q,q'\leq X^{1/6}$ are separated by at least $X^{-1/3}$, while $2X^{1/6-k}<X^{-1/3}$.

 \begin{lem}[Minor arcs]\label{balanced-minor}
  We have
  \[ \int_{\mathfrak{m}}\prod_{j=1}^s |f_j(\alpha)|\,\mathrm{d}\alpha \ll P^{1-k/s-\frac{1}{24Ks^2}}. \]
 \end{lem}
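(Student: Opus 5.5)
The plan is to save a power of $X$ pointwise from one exponential sum and control the other $s-1$ sums in mean by Hua's lemma \eqref{hua-est}, which applies because $s-1\geq H_k$. To avoid losing a factor from the imbalance between the $P_j$, I will not single out one fixed variable. Instead I will use the symmetrisation
\[ \prod_{j=1}^s|f_j(\alpha)| = \prod_{i=1}^s\Big(|f_i(\alpha)|\prod_{j\neq i}|f_j(\alpha)|\Big)^{1/s}, \]
and apply H\"older's inequality with $s$ equal exponents. This gives
\[ \int_{\mathfrak m}\prod_{j=1}^s|f_j|\,\mathrm d\alpha\leq\prod_{i=1}^s\Big(\sup_{\alpha\in\mathfrak m}|f_i(\alpha)|\int_0^1\prod_{j\neq i}|f_j(\alpha)|\,\mathrm d\alpha\Big)^{1/s}. \]

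\textbf{Step 1 (mean value).} For each $i$, apply H\"older again with exponent $s-1$, and then \eqref{hua-est} with $s-1\geq H_k$ in place of $s$:
\[ \int_0^1\prod_{j\neq i}|f_j|\,\mathrm d\alpha\leq\prod_{j\neq i}\Big(\int_0^1|f_j|^{s-1}\Big)^{1/(s-1)}\ll_\eps\prod_{j\neq i}P_j^{1-k/(s-1)+\eps}. \]
Taking the product over $i$ with the $1/s$ power, each $P_j$ appears $s-1$ times. The mean value parts therefore contribute
\[ P^{\frac{s-1}{s}\left(1-\frac{k}{s-1}\right)+\eps}=P^{\frac{s-1-k}{s}+\eps}. \]

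\textbf{Step 2 (pointwise Weyl bound).} Fix $\alpha\in\mathfrak m$. By Dirichlet's theorem with $Q=X^{k-1/6}$, there are coprime $a,q$ with $q\leq Q$ and $|\alpha-a/q|\leq 1/(qQ)\leq q^{-2}$. If $q\leq X^{1/6}$, then $\|\alpha-a/q\|\leq 1/Q=X^{1/6-k}$, so $\alpha\in\mathfrak M$. Hence $q>X^{1/6}$. Weyl's inequality for the sum of length $P_i$ gives
\[ |f_i(\alpha)|\ll P_i^{1+\eps}\big(q^{-1}+P_i^{-1}+qP_i^{-k}\big)^{1/K}. \]
We now use the balanced hypothesis $P_i\geq P_1\geq X^{1-\lambda}$. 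It gives $q^{-1}<X^{-1/6}$ and $P_i^{-1}\leq X^{\lambda-1}$. It also gives $qP_i^{-k}\leq X^{k-1/6}X^{-k+k\lambda}=X^{k\lambda-1/6}$. Since $k\lambda=1/12$, all three terms are $\leq X^{-1/12}$. This yields $\sup_{\mathfrak m}|f_i|\ll P_i^{1+\eps}X^{-1/(12K)}$.

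\textbf{Step 3 (combine).} The supremum parts contribute $\prod_i P_i^{1/s}X^{-1/(12K)}=P^{1/s}X^{-1/(12K)}$, up to $P^\eps$. Together with Step 1, the total is
\[ \ll P^{1-k/s+2\eps}X^{-1/(12K)}. \]
Since $X=P_s\geq P^{1/s}$, this is $\ll P^{1-k/s-1/(12Ks)+2\eps}$. Taking $\eps$ small, this is stronger than the claimed saving $P^{-1/(24Ks^2)}$.

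The step I expect to need the most care is the comparison of scales. A naive argument reserves a single variable for the Weyl saving. The resulting factor $P_1^{k/s}\prod_{j\geq2}P_j^{-k/(s(s-1))}$ can be as large as $X^{k\lambda/s}$, and this need not be beaten by $X^{-1/(12K)}$ when $K>s$. The symmetrised H\"older step removes this loss entirely, which is why I will use it rather than fixing $f_1$. The remaining checks are routine: Weyl's inequality uses $P_i\geq X^{1-\lambda}$, so $X$ may be taken large, with small $X$ absorbed into the implied constant; and the $\eps$-losses are absorbed into the saving.
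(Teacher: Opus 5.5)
Your proof is correct and is essentially the paper's argument. After your inner H\"older step, your bound is exactly
\[ \prod_{j=1}^s\Big(\sup_{\alpha\in\mathfrak m}|f_j(\alpha)|\Big)^{1/s}\bigg(\int_0^1|f_j(\alpha)|^{s-1}\,\mathrm d\alpha\bigg)^{1/s}. \]
The paper reaches the same expression from a single $s$-fold H\"older step followed by $\int_{\mathfrak m}|f_j|^s\le\sup_{\mathfrak m}|f_j|\int_0^1|f_j|^{s-1}$. So the symmetrisation is equivalent bookkeeping, not a new ingredient.

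The only real difference is how the suprema are bounded. The paper applies Weyl's inequality to $f_s$ alone, whose length $X$ is the scale defining the arcs, and uses the trivial bound $P_j$ for $j<s$. Its minor arc estimate therefore holds for arbitrary boxes, with no use of $P_1\ge X^{1-\lambda}$, and saves $X^{-1/(12Ks)}$. You use the balanced hypothesis, which is legitimately in force in this subsection, to apply Weyl to every $f_i$. This gives the stronger saving $X^{-1/(12K)}$, though nothing later needs it.

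One correction to your closing remark. With $P_1\le\cdots\le P_s$, the factor $P_1^{k/s}\prod_{j\geq2}P_j^{-k/(s(s-1))}$ is at most $1$, so reserving the \emph{shortest} sum for the Weyl bound loses nothing. The loss of up to $X^{k\lambda/s}$ arises when you reserve the longest sum $f_s$ and put the other $s-1$ sums into one H\"older step. The paper's $s$-fold H\"older avoids exactly this, because it uses $\sup_{\mathfrak m}|f_s|$ only to the power $1/s$.
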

 \begin{proof}
  By Dirichlet's approximation theorem \cite[Lemma 2.1]{vaughan97}, for every $\alpha\in[0,1]$ there are coprime integers $0\leq a\leq q$ with $1\leq q\leq X^{k-1/6}$ and
  \[ \bigg|\alpha-\frac{a}{q}\bigg|\leq \frac{X^{1/6-k}}{q}. \]
  If $\alpha\in\mathfrak{m}$, then necessarily $q>X^{1/6}$. Weyl's inequality \cite[Lemma 2.4]{vaughan97} then gives
  \[ |f_s(\alpha)|\ll_\eps X^{1+\eps}(q^{-1}+X^{-1}+qX^{-k})^{1/K}. \]
  Since $X^{1/6}<q\leq X^{k-1/6}$, taking $\eps = 1/12K$ gives
  \[ \sup_{\alpha\in\mathfrak{m}}|f_s(\alpha)|\ll X^{1-\frac{1}{12K}}. \]
  
  As $s-1\geq H_k$, \eqref{hua-est} gives 
  \[ \int_0^1 |f_j(\alpha)|^{s-1}\,\mathrm{d}\alpha \ll_\eta P_j^{s-1-k+\eta} \]
  for every $\eta>0$. Thus $\int_0^1 |f_j(\alpha)|^s\,\mathrm{d}\alpha \ll_\eta P_j^{s-k+\eta}$ for all $j$, while for $j=s$ we also have
  \[ \int_{\mathfrak{m}} |f_s(\alpha)|^s\,\mathrm{d}\alpha \leq \Big(\sup_{\alpha\in\mathfrak{m}}|f_s(\alpha)|\Big) \int_0^1 |f_s(\alpha)|^{s-1}\,\mathrm{d}\alpha \ll_\eta X^{s-k-\frac{1}{12K}+\eta}. \]
  By H\"older's inequality,
  \begin{align*}
   \int_{\mathfrak{m}}\prod_{j=1}^s |f_j(\alpha)|\,\mathrm{d}\alpha &\leq \bigg(\int_{\mathfrak{m}}|f_s(\alpha)|^s\,\mathrm{d}\alpha\bigg)^{1/s} \prod_{j=1}^{s-1}\bigg(\int_0^1 |f_j(\alpha)|^s\,\mathrm{d}\alpha\bigg)^{1/s} \\
   &\ll_{\eta} X^{1-k/s-\frac{1}{12Ks} + \eta/s}\, (P/X)^{1-k/s+\eta/s} \\
   &= X^{-\frac{1}{12Ks}}P^{1-k/s + \eta/s}.
  \end{align*}
  Since $X = P_s\geq P^{1/s}$, choosing $\eta=1/24Ks$ yields
  \[ \int_{\mathfrak{m}}\prod_{j=1}^s |f_j(\alpha)|\,\mathrm{d}\alpha \ll P^{1-k/s-\frac{1}{24Ks^2}}. \qedhere \]
 \end{proof}

 Now we deal with the major arcs. For $Y\leq X$, define
 \[ f_Y(\alpha):=\sum_{x\leq Y}e(\alpha x^k), \qquad v_Y(\beta):=\int_0^Y e(\beta \gamma^k)\,\mathrm{d}\gamma, \qquad S(q,a):=\sum_{r=1}^q e\bigg(\frac{ar^k}{q}\bigg). \]

 \begin{lem}\label{major-approx}
  If $q\leq X^{1/6}$, $(a,q)=1$, and $|\beta|\leq X^{1/6-k}$, then
  \[ f_Y(a/q+\beta)=q^{-1}S(q,a)v_Y(\beta)+O(X^{1/3}) \]
  uniformly for $Y\leq X$. 
 \end{lem}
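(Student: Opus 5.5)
This is the standard major arc approximation, cf. Vaughan's Theorem 4.1. I would prove it directly by splitting into residue classes and applying partial summation. The fact that the error is uniform in $Y$ comes from the argument never using $Y$ beyond $Y \leq X$.

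\emph{Step 1: split into residue classes.} Write $x = qm + r$ with $1\leq r\leq q$. Then
\[ f_Y(a/q+\beta)=\sum_{r=1}^q e\Big(\frac{ar^k}{q}\Big)\sum_{\substack{m\geq 0\\ qm+r\leq Y}} e\big(\beta (qm+r)^k\big). \]
This uses $e(a(qm+r)^k/q)=e(ar^k/q)$.

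\emph{Step 2: compare each inner sum with an integral.} For each $r$, compare the inner sum with
\[ \int_{(-r)/q}^{(Y-r)/q} e\big(\beta(q\gamma+r)^k\big)\,\mathrm{d}\gamma = q^{-1}v_Y(\beta) - q^{-1}\int_0^{r}\cdots, \]
where the second term is $O(1)$. The function $g(\gamma)=\beta(q\gamma+r)^k$ has derivative bounded by $kq|\beta|Y^{k-1}\ll qX^{1/6-k}X^{k-1}=qX^{-5/6}$ on the range. By the elementary comparison (Euler--Maclaurin to first order, or Vaughan's Lemma 4.2), the sum minus the integral is
\[ \ll 1+\frac{Y}{q}\sup|g'| \ll 1+ Y X^{-5/6}\ll 1+X^{1/6}. \]
Endpoint effects also contribute $O(1)$.

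\emph{Step 3: sum over residues.} Summing over $r$ gives
\[ f_Y(a/q+\beta)=q^{-1}S(q,a)v_Y(\beta)+O\big(q(1+X^{1/6})\big). \]
Since $q\leq X^{1/6}$, the error is $O(X^{1/3})$, as required.

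The only place needing care is the derivative bound in Step 2. The sum-versus-integral comparison must be applied in the form $\sum_{A<m\le B}h(m)=\int_A^B h+O(1+\int_A^B|h'|)$ with $h(m)=e(g(m))$. This gives $|h'|=2\pi|g'|$, whose integral over a range of length $Y/q$ is $\ll Y\cdot |\beta| k Y^{k-1}\ll X^{1/6}$. This holds uniformly in $Y\le X$ because $Y^{k}\le X^k$. Everything else is routine bookkeeping.
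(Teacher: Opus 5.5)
Your proof is correct and is essentially the paper's argument: the paper does partial summation in $x$ against the bounded counting error $A_r(u)$, which is the same first-order Euler--Maclaurin comparison you apply after substituting $x=qm+r$, giving $O(1+|\beta|Y^k)=O(X^{1/6})$ per residue class and $O(qX^{1/6})=O(X^{1/3})$ in total. One minor slip: with lower limit $-r/q$, the substitution $t=q\gamma+r$ gives exactly $q^{-1}v_Y(\beta)$, so no $\int_0^r$ correction term arises, though including it is harmless since it is $O(1)$.
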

 \begin{proof}
  Writing the sum in residue classes modulo $q$ gives
  \begin{equation}
   f_Y(a/q+\beta)=\sum_{r=1}^q e\bigg(\frac{ar^k}{q}\bigg)\sum_{\substack{x\leq Y\\ x\equiv r(q)}}e(\beta x^k). \label{aqbeta}
  \end{equation}
  Put $F(t) := e(\beta t^k)$, and let
  \[ A_r(u) := \sum_{\substack{x\leq u\\ x\equiv r(q)}}1-\frac{u}{q}. \]
  By partial summation,
  \[ \sum_{\substack{x\leq Y\\ x\equiv r(q)}}F(x) = \int_{0}^Y F(u)\,\mathrm{d}\bigg(\frac{u}{q}+A_r(u)\bigg) = \frac{1}{q}\int_0^Y F(u)\,\mathrm{d}u+\int_{0}^Y F(u)\,\mathrm{d}A_r(u). \]
  Integrating the last term by parts, and using that $A_r(u)=O(1)$ uniformly in $u$ and $r$, we get
  \[ \int_{0}^Y F(u)\,\mathrm{d}A_r(u) \ll |F(0)|+|F(Y)|+\int_0^Y |F'(u)|\,\mathrm{d}u. \]
  Thus, since $|F(u)|=1$, we have
  \begin{equation*}
   \sum_{\substack{x\leq Y\\ x\equiv r(q)}}F(x)=\frac{1}{q}\int_0^Y F(t)\,\mathrm{d}t+O\bigg(1+\int_0^Y |F'(t)|\,\mathrm{d}t\bigg)
  \end{equation*}
  uniformly in $1\leq r\leq q$.

  Now $|F'(t)|=2\pi k|\beta|t^{k-1}$, and hence $\int_0^Y |F'(t)|\,\mathrm{d}t\ll_k |\beta|Y^k\leq X^{1/6}$. It follows that
  \[ \sum_{\substack{x\leq Y\\ x\equiv r(q)}} e(\beta x^k) = \frac{1}{q} v_Y(\beta)+O(X^{1/6}). \]
  Substituting this into \eqref{aqbeta} yields, since $q\leq X^{1/6}$,
  \begin{align*}
   f_Y(a/q+\beta) &= \frac{1}{q} v_Y(\beta)\sum_{r=1}^q e\bigg(\frac{ar^k}{q}\bigg) + O(qX^{1/6}) \\
   &= q^{-1} S(q,a) v_Y(\beta) + O(X^{1/3}). \qedhere
  \end{align*}
 \end{proof}
 
 \begin{lem}\label{v-bound}
  We have
  \[ |v_Y(\beta)|\ll \min(Y,|\beta|^{-1/k}). \]
 \end{lem}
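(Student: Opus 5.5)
The bound $|v_Y(\beta)|\le Y$ is immediate from the triangle inequality, since the integrand has modulus $1$ on an interval of length $Y$. So the content is the bound $|v_Y(\beta)|\ll|\beta|^{-1/k}$. This is trivial unless $\beta\neq 0$ and $|\beta|^{-1/k}<Y$, so I assume both. Taking complex conjugates, I may also assume $\beta>0$.

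I will substitute $u=\gamma^k$, so that $\mathrm{d}\gamma=k^{-1}u^{1/k-1}\,\mathrm{d}u$. This gives
\[ v_Y(\beta)=\frac{1}{k}\int_0^{Y^k}u^{1/k-1}e(\beta u)\,\mathrm{d}u. \]
Set $T:=\beta^{-1}\le Y^k$ and split the range at $u=T$.

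On $[0,T]$, the trivial bound gives
\[ \frac1k\int_0^T u^{1/k-1}\,\mathrm{d}u=T^{1/k}=\beta^{-1/k}. \]
On $[T,Y^k]$, the weight $g(u)=u^{1/k-1}$ is positive and decreasing. The second mean value theorem, applied separately to the real and imaginary parts, gives a bound of the form
\[ \int_T^{Y^k}g(u)e(\beta u)\,\mathrm{d}u \ll g(T)\sup_{T\le c\le Y^k}\bigg|\int_T^{c}e(\beta u)\,\mathrm{d}u\bigg|. \]
The inner integral is at most $1/(\pi\beta)$. So this part is
\[ \ll T^{1/k-1}\beta^{-1}=\beta^{1-1/k}\beta^{-1}=\beta^{-1/k}. \]

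Alternatively, one can integrate by parts, writing $e(\beta u)=(2\pi i\beta)^{-1}\,\mathrm{d}e(\beta u)$. The boundary terms are $\ll\beta^{-1}T^{1/k-1}$, and the remaining integral is $\ll\beta^{-1}\int_T^\infty|g'(u)|\,\mathrm{d}u=\beta^{-1}g(T)$, giving the same bound.

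Combining the two ranges gives $|v_Y(\beta)|\ll_k|\beta|^{-1/k}$. There is no real obstacle here. The only point requiring care is that the implied constant is uniform in $Y$; this holds because the tail estimate uses only the monotonicity of $g$, and never the upper endpoint $Y^k$.
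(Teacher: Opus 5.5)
Your proof is correct and is essentially the paper's argument in different coordinates. The paper rescales by $|\beta|^{1/k}$, bounds the piece over $[0,1]$ trivially and integrates by parts on $[1,|\beta|^{1/k}Y]$; under $u=\gamma^k$ this is exactly your split at $u=|\beta|^{-1}$ followed by integration by parts against the decreasing weight $u^{1/k-1}$, and your second-mean-value-theorem version is only a minor variant of that step.
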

 \begin{proof}
  The bound $|v_Y(\beta)|\leq Y$ is trivial. If $\beta\neq 0$, then the change of variables $u=|\beta|^{1/k}\gamma$ gives
  \[ v_Y(\beta)=|\beta|^{-1/k}\int_0^{|\beta|^{1/k}Y} e(\operatorname{sgn}(\beta)u^k)\,\mathrm{d}u. \]
  Writing $T:=|\beta|^{1/k}Y$ and $\sigma:=\operatorname{sgn}(\beta)$, if $T\geq 1$ then
  \begin{align*}
   \int_0^T e(\sigma u^k)\,\mathrm{d}u &\ll 1+\bigg|\int_1^T e(\sigma u^k)\,\mathrm{d}u\bigg| \\
   &= 1+\bigg|\left[\frac{e(\sigma u^k)}{2\pi i\sigma k u^{k-1}}\right]_1^T + \frac{k-1}{2\pi i\sigma k}\int_1^T u^{-k}e(\sigma u^k)\,\mathrm{d}u\bigg| \\
   &\ll 1.
  \end{align*}
  The case $T<1$ is trivial. Hence $|v_Y(\beta)|\ll |\beta|^{-1/k}$.
 \end{proof}

 \begin{lem}[Singular integral]\label{sing-int}
  We have
  \[ \int_{-\infty}^{\infty}\prod_{j=1}^s v_{P_j}(\beta)\, e(-N\beta)\,\mathrm{d}\beta \ll PX^{-k}. \]
 \end{lem}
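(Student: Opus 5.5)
The plan is to avoid the pointwise bound of Lemma \ref{v-bound} altogether and compute the singular integral exactly as a density. Integrating $\prod_j\min(P_j,|\beta|^{-1/k})$ would cost a factor depending on $P_1$ and lead to the weaker bound $PP_1^{-k}$, not the stated $PX^{-k}$. The substitution $u=\gamma^k$ turns each $v_{P_j}$ into a Fourier transform:
\[ v_{P_j}(\beta)=\int_0^{P_j^k}\rho(u)\,e(\beta u)\,\mathrm{d}u, \qquad \rho(u):=\tfrac1k u^{1/k-1}. \]
So $v_{P_j}(\beta)=\widehat{\rho_j}(-\beta)$, where $\rho_j:=\rho\,\mathbf 1_{[0,P_j^k]}\in L^1(\mathbb R)$ and $\widehat{g}(\xi)=\int g(u)e(-\xi u)\,\mathrm{d}u$. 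Then $\prod_j v_{P_j}(\beta)$ is the Fourier transform, at $-\beta$, of the convolution $g:=\rho_1*\cdots*\rho_s$. The singular integral is therefore the inverse Fourier transform of $\widehat g$ evaluated at $N$, and the goal is to show it equals $g(N)$.

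The first step is to justify this inversion. By Lemma \ref{v-bound}, $|\prod_j v_{P_j}(\beta)|\ll\min(P,|\beta|^{-s/k})$. Since $s\geq H_k+1>k+1$, this is integrable, so $\widehat g\in L^1$ and the integral in the statement converges absolutely. Next I check that $g$ is continuous. Each $\rho_j$ lies in $L^p$ for some $p>1$, with only an integrable singularity at $0$ and a jump at $P_j^k$. A convolution of an $L^p$ function with an $L^{p'}$ function is continuous, and $s\geq3$ lets me group factors to get this (for instance $\rho_1*\rho_2$ is bounded). With $g\in L^1$ continuous and $\widehat g\in L^1$, Fourier inversion holds pointwise, giving
\[ \int_{-\infty}^{\infty}\prod_{j=1}^s v_{P_j}(\beta)e(-N\beta)\,\mathrm{d}\beta = g(N)=\int_{\substack{u_1+\cdots+u_s=N\\ 0\leq u_j\leq P_j^k}}\prod_{j=1}^s\rho(u_j)\,\mathrm{d}u_1\cdots\mathrm{d}u_{s-1}, \]
where $u_s:=N-u_1-\cdots-u_{s-1}$. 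I expect this measure-theoretic justification to be the only delicate point, and it is routine.

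It remains to bound $g(N)$ by positivity. On the domain of integration some coordinate satisfies $u_i\geq N/s$. I split the domain according to the index $i$ attaining this, which gives at most $s$ pieces. On the $i$-th piece:
\begin{itemize}
\item I use $\rho(u_i)\leq\frac1k(N/s)^{1/k-1}\ll N^{1/k-1}$.
\item I parametrize by the other $s-1$ variables, which determine $u_i$, and integrate each freely using $\int_0^{P_j^k}\rho(u)\,\mathrm{d}u=P_j$.
\item This piece is nonempty only if $P_i^k\geq N/s$. In that case, using $X\leq N^{1/k}$, we get $P_i\gg X$, hence $\prod_{j\neq i}P_j=P/P_i\ll P/X$.
\end{itemize}
Therefore $g(N)\ll N^{1/k-1}P/X$. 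Finally, $N\asymp X^k$ by the normalization $(N/s)^{1/k}\leq X\leq N^{1/k}$, so $N^{1/k-1}\asymp X^{1-k}$. This gives $g(N)\ll PX^{-k}$, with no loss in terms of $\lambda$ or $P_1$.
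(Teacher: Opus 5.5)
Your argument is the paper's proof: you substitute $u=\gamma^k$, use Fourier inversion to write the singular integral as $\int_{u_1+\cdots+u_s=N}\prod_j\rho(u_j)$, cover the hyperplane by the pieces where $u_i\geq N/s$, bound $\rho(u_i)\ll X^{1-k}$, integrate the other coordinates freely, and note that a piece is empty unless $P_i\gg X$. The one slip is in your extra justification of the inversion step: $\rho_1*\rho_2$ is not bounded when $k\geq3$, since it equals a constant times $u^{2/k-1}$ near $0$, so continuity of $g$ should instead be obtained by splitting the $s$ factors into two groups and applying Young's inequality with $\rho_j\in L^{s/(s-1)}$, which holds because $s>k$ (not merely because $s\geq3$).
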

 \begin{proof}
  Since $s>k$, the bound $|v_{P_j}(\beta)|\ll \min(P_j,|\beta|^{-1/k})$ from Lemma \ref{v-bound} shows that the integral is absolutely convergent. Now make the change of variables $u_j=\gamma_j^k$ in the definition of $v_{P_j}$, so that
  \[ v_{P_j}(\beta)=\frac1k\int_0^{P_j^k} u^{1/k-1}e(\beta u)\,\mathrm{d}u. \]
  By Fourier inversion,
  \[ J:= \int_{-\infty}^{\infty}\prod_{j=1}^s v_{P_j}(\beta)\, e(-N\beta)\,\mathrm{d}\beta = \frac{1}{k^s}\int_{\substack{0\leq u_j\leq P_j^k\\ u_1+\cdots+u_s=N}} (u_1\cdots u_s)^{1/k-1}\,\mathrm{d}\sigma. \]
 
  On the hyperplane $u_1+\cdots+u_s=N$, at least one coordinate is $\geq N/s$. We cover the hyperplane by the sets
  \[ E_i:=\{u_1+\cdots+u_s=N,\ 0\leq u_j\leq P_j^k,\ u_i\geq N/s\} \qquad (1\leq i\leq s). \]
  It is enough to bound the contribution from each $E_i$. On $E_i$ we have $u_i\geq N/s\gg_s X^k$, so since $1/k-1<0$, it holds that $u_i^{1/k-1}\ll_s X^{1-k}$.
  Solving for $u_i=N-\sum_{j\neq i}u_j$, and bounding the remaining domain by the full box $0\leq u_j\leq P_j^k$ for $j\neq i$, the contribution from $E_i$ is at most
  \[ X^{1-k}\prod_{j\neq i}\int_0^{P_j^k}u_j^{1/k-1}\,\mathrm{d}u_j \ll X^{1-k}\prod_{j\neq i}P_j. \]
  Moreover, $E_i$ is empty unless $P_i^k\geq N/s\gg_s X^k$, so whenever it contributes we have
  \[ X^{1-k}\prod_{j\neq i}P_j=X^{1-k}\frac{P}{P_i}\ll_s PX^{-k}. \]
  Summing over $1\leq i\leq s$ gives $J\ll PX^{-k}$.
 \end{proof}

 \begin{lem}[Major arcs]\label{balanced-major}
  There exists $\delta'=\delta'(k,s)>0$ such that
  \[ \int_{\mathfrak M} f_1(\alpha)\cdots f_s(\alpha)\, e(-N\alpha)\,\mathrm{d}\alpha \ll N^{-1}P+P^{1-k/s-\delta'}. \]
 \end{lem}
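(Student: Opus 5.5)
On each arc $\mathfrak M(q,a)$ we write $\alpha=a/q+\beta$ and replace every $f_j$ by its major arc approximation from Lemma \ref{major-approx}:
\[ f_j(a/q+\beta)=q^{-1}S(q,a)v_{P_j}(\beta)+O(X^{1/3}). \]
The plan is to expand the product $\prod_j f_j$ telescopically, writing $\prod f_j-\prod g_j=\sum_i g_1\cdots g_{i-1}(f_i-g_i)f_{i+1}\cdots f_s$ with $g_j:=q^{-1}S(q,a)v_{P_j}(\beta)$. The main term is then
\[ \sum_{q\leq X^{1/6}}\sum_{(a,q)=1}q^{-s}S(q,a)^s e(-Na/q)\int_{|\beta|\leq X^{1/6-k}}\prod_j v_{P_j}(\beta)e(-N\beta)\,\mathrm d\beta. \]

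\textbf{Main term.} We extend the $\beta$-integral to $\mathbb R$. By Lemma \ref{v-bound}, using $P_j\leq X$ and $|v_{P_j}|\ll|\beta|^{-1/k}$, the tail is
\[ \ll\int_{|\beta|>X^{1/6-k}}|\beta|^{-s/k}\,\mathrm d\beta\ll X^{(k-1/6)(s/k-1)}. \]
This should be compared with $PX^{-k}\geq X^{s(1-\lambda)-k}$. The exponent of the tail is $s-k-s/(6k)+1/6$, while that of the comparison is $s-k-s/(12k)$. So the tail is smaller than $PX^{-k}$ by a factor $X^{-s/(12k)+1/6}$, which is a power saving since $s>2k$. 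Lemma \ref{sing-int} bounds the completed integral by $PX^{-k}\ll P/N$, using $X^k\geq N/s$. For the singular series we use the standard bound $S(q,a)\ll q^{1-1/k+\eps}$ (e.g.\ \cite[Theorem 4.2]{vaughan97}). This gives $\sum_q q\cdot q^{-s/k+\eps}\ll1$ because $s>2k+1$. Hence the main term is $\ll N^{-1}P$.

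\textbf{Error terms.} Each error term contains one factor $O(X^{1/3})$ and the other $s-1$ factors. Each of those is bounded by $\ll\min(P_j,|\beta|^{-1/k})$ for the $g_j$, and trivially by $P_j$ for the $f_j$. Since $\lambda+1/3<1$, $X^{1/3}\leq P_i^{1/3/(1-\lambda)}$ is a genuine power smaller than $P_i$. To keep integrability we need decay in $\beta$, so I would bound the $g_j$ using $|v|\ll\min(P_j,|\beta|^{-1/k})$. For the $f_j$ I would use $|f_j|\leq |g_j|+O(X^{1/3})$, re-expanding so that every term is a product of $g$'s and at least one $X^{1/3}$. A term with $m\geq1$ error factors is bounded by
\[ X^{m/3}\sum_q q^{2}\int\min(X,|\beta|^{-1/k})^{s-m}\,\mathrm d\beta \]
(for $s-m>k$), or trivially by $X^{m/3}X^{s-m}\,\mathrm{meas}(\mathfrak M)\ll X^{m/3+s-m+1/2-k}$. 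In every case this is $\ll X^{s-k-\kappa}$ for some $\kappa>0$. Finally, $X^{s-k}\leq P^{1-k/s}X^{s\lambda}$ because $P\geq X^{s(1-\lambda)}$, and $\lambda=1/12k$ is small. So we get $\ll P^{1-k/s-\delta'}$ provided $\kappa$ beats $s\lambda\cdot$(const).

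\textbf{Main obstacle.} The bookkeeping in this last comparison is where I expect the difficulty: we must ensure that the saving coming from $X^{1/3}$ against $X^{1-\lambda}$, combined with the measure $X^{1/2-k}$ of $\mathfrak M$, dominates the loss $X^{s\lambda}=X^{s/12k}$ incurred in converting $X$-powers into $P$-powers. If the trivial measure bound is insufficient for large $s$, I would instead use Hölder together with the decay of $v$ to gain a factor $X^{-k}$ from $\int\min(X,|\beta|^{-1/k})^{s-1}\,\mathrm d\beta\ll X^{s-1-k}$. The cost from the $q$-sum is then at most $X^{1/3}$, giving a term $\ll X^{s-k-1+1/3+1/3}$. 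This saves $X^{1/3}$, which beats $X^{s\lambda}$ only after also using $P_j\geq X^{1-\lambda}$ more carefully (bounding by $P/P_i$ rather than $X^{s-1}$). I would try that refinement first.
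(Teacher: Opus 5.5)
\textbf{Main term.} Your treatment is correct, and slightly tidier than the paper's. You never complete the singular series; you just bound the truncated sum by the absolutely convergent $\sum_q\sum_a|q^{-1}S(q,a)|^s\ll 1$. You also absorb the tail of the singular integral into $PX^{-k}\ll N^{-1}P$ rather than into the power-saving term. Only $s>2k$ is needed there. Your stated $s>2k+1$ actually fails for $k=2$, $s=5$, but it is not required.

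\textbf{Error terms: there is a gap.} Your written argument bounds the non-error factors by powers of $X$. This gives at best $X^{s-k-\kappa}$, with $\kappa=1/6$ from the trivial measure bound or $\kappa=1/3$ from the decay-plus-$q$-sum route. You then convert using $X^{s-k}\le P^{1-k/s}X^{\lambda(s-k)}$, which comes from $P\ge X^{s(1-\lambda)}$. That conversion loses $X^{\lambda(s-k)}=X^{(s-k)/12k}$. This loss exceeds $X^{1/6}$ once $s>3k$ and exceeds $X^{1/3}$ once $s>5k$. The lemma is claimed for every $s\ge H_k+1$, and in particular for $s>5k$, so this route genuinely fails, as you suspected.

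\textbf{The fix.} What is missing is exactly the refinement you mention at the end. It needs neither decay in $\beta$ nor re-expansion of the $f_j$. Integrability is not an issue at all, since $\mathrm{meas}(\mathfrak M)\ll X^{1/2-k}$. In the telescoping sum:
\begin{itemize}
\item bound each $|f_j|$ and each $|g_j|$ simply by $P_j$;
\item bound the single error factor by $X^{1/3}\le P_iX^{-2/3+\lambda}$, using $P_i\ge X^{1-\lambda}$.
\end{itemize}
The pointwise error is then $\ll PX^{-2/3+\lambda}$, and its integral over $\mathfrak M$ is $\ll PX^{-k-1/6+\lambda}$. The only conversion still needed is of the \emph{negative} power $X^{-k-1/6+\lambda}$. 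This uses $X=P_s\ge P^{1/s}$ and costs nothing, giving
\[ PX^{-k-1/6+\lambda}\le P^{1-k/s-(1/6-\lambda)/s}. \]
The essential point is to keep $\prod_j P_j$ intact. Your version inflates it to $X^{s-1}$ and then pays $X^{\lambda(s-k)}$ to deflate $X^{s-k}$ back to $P^{1-k/s}$; that loss grows with $s$, while the saving from the major arc approximation is a fixed power of $X$.
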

 \begin{proof}
  For $\alpha = a/q + \beta\in\mathfrak{M}(q,a)$, write
  \[ V_j(\alpha):=q^{-1}S(q,a)v_{P_j}(\beta). \]
  By Lemma \ref{major-approx}, $f_j(\alpha)=V_j(\alpha)+O(X^{1/3})$. Also $|V_j(\alpha)|\leq P_j$, since $|S(q,a)|\leq q$ and $|v_{P_j}(\beta)|\leq P_j$. As $P_j\geq P_1\geq X^{1-\lambda}$, we have $X^{1/3}\leq P_jX^{-2/3+\lambda}$. Thus, expanding the product one error at a time gives
  \[ \prod_{j=1}^s f_j(\alpha)=\prod_{j=1}^s V_j(\alpha)+O(PX^{-2/3+\lambda}). \]
  Since $\int_{\mathfrak M}\mathrm{d}\alpha \ll X^{1/2-k}$, the total contribution of this error is $O(PX^{-k-1/6+\lambda})$. Using $P\leq X^s$, this is $O(P^{1-k/s-\delta_a})$, where one may take $\delta_a=(1/6-\lambda)/s>0$. Hence
  \begin{align*}
   \int_{\mathfrak M} f_1(\alpha)\cdots f_s(\alpha)\,e(-N\alpha)\,\mathrm{d}\alpha &= \int_{\mathfrak M} V_1(\alpha)\cdots V_s(\alpha)\,e(-N\alpha)\,\mathrm{d}\alpha + O(P^{1-k/s-\delta_a})\\
   &= \sum_{q\leq X^{1/6}}\sum_{\substack{1\leq a\leq q\\(a,q)=1}} q^{-s}S(q,a)^s e(-Na/q) \\
   &\hspace{4em}\times \int_{-X^{1/6-k}}^{X^{1/6-k}}\prod_{j=1}^s v_{P_j}(\beta)\,e(-N\beta)\,\mathrm{d}\beta +O(P^{1-k/s-\delta_a}).
  \end{align*}

  We now compare this expression with the completed singular series and singular integral. Since $|S(q,a)|\ll_k q^{1-1/k}$ (cf. \cite[Theorem 4.2]{vaughan97}) and $s>2k$,
  \[ \sum_{q=1}^{\infty}\sum_{\substack{1\leq a\leq q\\(a,q)=1}}\bigg|\frac{S(q,a)}{q}\bigg|^s \ll \sum_{q=1}^{\infty}q^{1-s/k}<\infty. \]
  Moreover, by Lemma \ref{sing-int}, the full singular integral is $O(PX^{-k})=O(N^{-1}P)$. Thus the completed contribution is $O(N^{-1}P)$, and it remains only to estimate the two tails.

  First, since $k\lambda<1/6$, for $|\beta|\geq X^{1/6-k}$ we have $|\beta|^{-1/k}\leq X^{1-1/6k}\leq X^{1-\lambda}\leq P_j$ for every $j$. Hence, by Lemma \ref{v-bound},
  \[ \int_{|\beta|>X^{1/6-k}}\prod_{j=1}^s |v_{P_j}(\beta)|\,\mathrm{d}\beta \ll \int_{X^{1/6-k}}^\infty \beta^{-s/k}\,\mathrm{d}\beta \ll X^{s-k-\frac{1}{6}(s/k-1)}. \]
  Since $P\geq X^{s(1-\lambda)}$, this is $O(P^{1-k/s-\delta_b})$, where one may take
  \[ \delta_b =\frac{\frac{1}{6}(s/k-1)-\lambda(s-k)}{s}=\frac{s-k}{12ks}. \]

  Second, we complete the singular series. Since the integral has now been replaced by the full singular integral, Lemma \ref{sing-int} gives a contribution
  \[ PX^{-k}\sum_{q>X^{1/6}}q^{1-s/k}\ll PX^{-k-\frac{1}{6}(s/k-2)}. \]
  Since $P\leq X^s$, this is $O(P^{1-k/s-\delta_c})$, where one may take
  \[ \delta_c =\frac{s/k-2}{6s}=\frac{s-2k}{6ks}. \]

  Combining the approximation error, the completed singular series and singular integral contribution, and the two tails, proves the result with $\delta':=\min\{\delta_a,\delta_b,\delta_c\}>0$.
 \end{proof}

 \begin{proof}[Proof of Proposition \ref{balanced-box}]
  By \eqref{root-int}, Lemmas \ref{balanced-minor} and \ref{balanced-major} give
  \begin{align*}
   \mathrm{Root}(P_1,\ldots,P_s;N) &\leq \int_{\mathfrak{m}}\prod_{j=1}^s |f_j(\alpha)|\,\mathrm{d}\alpha +\bigg|\int_{\mathfrak{M}} f_1(\alpha)\cdots f_s(\alpha)\,e(-N\alpha)\,\mathrm{d}\alpha\bigg| \\
   &\ll N^{-1}P+P^{1-k/s-\delta_0},
  \end{align*}
  where $\delta_0 = \min\{\frac{1}{24Ks^2},\delta'\}$.
 \end{proof}

%%%%%%%%%%%%%%%%%%%%%%%%%%%
\subsection{Unbalanced case}
 We now treat the complementary case, where the smallest side length is too small compared with the geometric mean of the box.

 \begin{lem}\label{weighted-tail}
  Let $s \geq H_k$. For every $\eps>0$ and every $m \geq 1$, one has
  \[ \sum_{x_1^k+\cdots+x_s^k=m} (x_1\cdots x_s)^{-1+k/s} \ll_{\eps} m^\eps. \]
 \end{lem}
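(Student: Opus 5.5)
Decompose dyadically and feed each block into Hua's lemma through the counting identity. Each $x_j$ lies in a unique range $M_j< x_j\le 2M_j$ with $M_j=2^{a_j}$, $a_j\ge 0$. Since $x_j^k\le m$, we have $M_j\le m^{1/k}$, so there are $O(\log m)^s\ll_\eps m^{\eps/2}$ choices of the tuple $(M_1,\dots,M_s)$. On a fixed dyadic block the weight is $(x_1\cdots x_s)^{-1+k/s}\asymp (M_1\cdots M_s)^{-1+k/s}$, because $-1+k/s\le 0$ and $x_j\asymp M_j$. It therefore suffices to show, for each block,
\[ \#\{x_1^k+\cdots+x_s^k=m,\ M_j<x_j\le 2M_j\}\ll_\eps (M_1\cdots M_s)^{1-k/s}m^{\eps/2}. \]

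To count solutions in a block, write $g_j(\alpha)=\sum_{M_j<x\le 2M_j}e(\alpha x^k)$. By orthogonality the count is
\[ \int_0^1 g_1\cdots g_s\,e(-m\alpha)\,\mathrm{d}\alpha\le\int_0^1\prod_j|g_j|\,\mathrm{d}\alpha\le\prod_{j=1}^s\Big(\int_0^1|g_j|^s\,\mathrm{d}\alpha\Big)^{1/s}, \]
using H\"older's inequality with all exponents equal to $s$. We need \eqref{hua-est} for the dyadic sums $g_j$, which are differences of two complete sums. Since $|g_j|^s\ll |f_{2M_j}|^s+|f_{M_j}|^s$ (where $f_Y$ is as defined before Lemma \ref{major-approx}) and $s\ge H_k$, we get
\[ \int_0^1|g_j|^s\,\mathrm{d}\alpha\ll_\eta M_j^{s-k+\eta}. \]
Combining these gives a block count of $\ll (M_1\cdots M_s)^{1-k/s+\eta/s}$. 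Because every $M_j\le m^{1/k}$, the extra factor is $(\prod M_j)^{\eta/s}\le m^{\eta/k}$. Choosing $\eta$ small in terms of $\eps$ gives the required bound for each block.

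Summing over the $\ll m^{\eps/2}$ blocks then completes the proof.

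There are two places where care is needed. First, \eqref{hua-est} is stated for integer $s$ (or real $s$ in Wooley's version) at exponent exactly $s$, which is what H\"older requires here; this is why the hypothesis is $s\ge H_k$ rather than $s\ge H_k+1$. Second, one must check that the implied constants do not depend on the $M_j$. This holds because \eqref{hua-est} is uniform in $X$, including small $X$ where the bound is trivial, and the case $M_j=1/2$ does not arise since $x_j\ge 1$.

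The only genuinely substantive input is \eqref{hua-est}; the rest is bookkeeping. I expect no serious obstacle beyond verifying that the dyadic truncation preserves the Hua bound, which is handled by the triangle inequality above.
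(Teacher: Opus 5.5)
Your argument is correct and is essentially the paper's proof: dyadic blocks, H\"older with all exponents equal to $s$, \eqref{hua-est} applied to each factor, and a sum over $O((\log m)^s)$ blocks. Your direct bound $M_j\le m^{1/k}$ does the job of the paper's AM--GM step, and your triangle-inequality remark makes explicit the passage from complete to dyadic sums that the paper leaves implicit. One small slip: with $M_j<x_j\le 2M_j$ and $a_j\ge 0$, the value $x_j=1$ is not covered, so the block $M_j=1/2$ (i.e.\ $a_j=-1$) is in fact needed, contrary to your closing remark; the bound on that block is trivial.
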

 \begin{proof}
  We decompose each variable dyadically: for each $i$ we take $D_i=2^{j_i}$ and restrict $x_i$ to the interval $D_i\leq x_i < 2D_i$. Since any solution has $x_i\leq m^{1/k}$, there are $O((\log m)^s)$ relevant choices of $(D_1,\ldots,D_s)$.

  In one such dyadic box, the number of solutions is bounded by
  \[ \int_0^1 \prod_{i=1}^s \bigg|\sum_{D_i\leq x< 2D_i} e(\alpha x^k)\bigg|\, \mathrm{d}\alpha. \]
  By H\"older's inequality and \eqref{hua-est}, this is $\ll_\eps \prod_{i=1}^s D_i^{1-k/s+\eps/s}$. On the same box, the weight $(x_1\cdots x_s)^{-1+k/s}$ is $\ll \prod_{i=1}^s D_i^{-1+k/s}$. Thus the contribution of this box to the weighted sum is 
  \[ \ll_\eps \prod_{i=1}^s D_i^{\eps/s}. \]
  
  If the box contributes at all, then there is a solution with $D_i\leq x_i<2D_i$, so
  \[ D_1^k+\cdots+D_s^k\leq x_1^k+\cdots+x_s^k=m. \]
  By AM--GM,
  \[ (D_1\cdots D_s)^{1/s}\leq \bigg(\frac{D_1^k+\cdots+D_s^k}{s}\bigg)^{1/k}\ll m^{1/k}, \]
  therefore $\prod_i D_i^{\eps/s}\ll m^{\eps/k}$. Summing over the $O((\log m)^s)$ boxes gives the result.
 \end{proof}
 
 \begin{prop}\label{unbalanced-prop}
  Let $s\geq H_k+1$ be an integer, and suppose that $P_1\leq P^{1/s-\tau}$. Then there exists $\delta_1=\delta_1(k,s)>0$ such that
  \[ \mathrm{Root}(P_1,\ldots,P_s;N) \ll P^{1-k/s-\delta_1}. \]
 \end{prop}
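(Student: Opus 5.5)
The plan is to fix the smallest variable $x_1$ and control the remaining $s-1$ variables with the weighted estimate of Lemma \ref{weighted-tail}. This is allowed because $s-1\geq H_k$. The smallness of $P_1$ relative to the geometric mean $P^{1/s}$ then turns into a power saving.

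\textbf{Step 1: fix $x_1$.} Fix $x_1\leq P_1$ and put $m:=N-x_1^k$. If $m\leq 0$ there is nothing to count. Otherwise we must count solutions of $x_2^k+\cdots+x_s^k=m$ with $1\leq x_j\leq P_j$. Each such solution satisfies $P_j/x_j\geq 1$, so we insert the weight $\prod_{j=2}^s (P_j/x_j)^{1-k/(s-1)}\geq 1$. This is legitimate since $1-k/(s-1)>0$. Dropping the box restriction, the count is at most
\[ \bigg(\prod_{j=2}^s P_j\bigg)^{1-\frac{k}{s-1}} \sum_{x_2^k+\cdots+x_s^k=m}(x_2\cdots x_s)^{-1+\frac{k}{s-1}} \ll_\eps (P/P_1)^{1-\frac{k}{s-1}}\, m^{\eps}, \]
by Lemma \ref{weighted-tail} applied with $s-1$ in place of $s$.

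\textbf{Step 2: sum over $x_1$.} Summing over $x_1\leq P_1$ and using $m\leq N$ gives
\[ \mathrm{Root}(P_1,\ldots,P_s;N)\ll_\eps N^\eps P_1^{\frac{k}{s-1}}P^{1-\frac{k}{s-1}}. \]
Write $P_1=P^{1/s-t}$ with $t\geq\tau$. Then
\[ P_1^{\frac{k}{s-1}}P^{1-\frac{k}{s-1}} = P^{\,1-\frac{k}{s-1}+\frac{k}{s(s-1)}-\frac{kt}{s-1}} = P^{\,1-\frac{k}{s}-\frac{kt}{s-1}} \leq P^{\,1-\frac{k}{s}-\frac{k\tau}{s-1}}. \]

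\textbf{Step 3: absorb $N^\eps$ into a power of $P$.} By the setup reduction, $N\leq sX^k$ and $X=P_s\leq P$, so $N^\eps\ll P^{k\eps}$. Taking $\eps=\tau/(2(s-1))$ gives
\[ \mathrm{Root}(P_1,\ldots,P_s;N)\ll P^{1-k/s-\delta_1}, \qquad \delta_1=\frac{k\tau}{2(s-1)}. \]

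\textbf{Main obstacle.} The only delicate point is this last step: the $m^\eps$ loss from Lemma \ref{weighted-tail} must be expressed in terms of $P$. This is exactly why the reduction $X\leq N^{1/k}\leq s^{1/k}X$ from the setup is needed, together with the trivial inequality $X\leq P$. Everything else is the exponent bookkeeping above, where the saving comes purely from $t\geq\tau$.
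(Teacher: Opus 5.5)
Your proof is correct and is essentially the paper's argument: fix $x_1$, bound the remaining count by Lemma \ref{weighted-tail} with $s-1$ in place of $s$ via the weight inequality $(x_2\cdots x_s)^{-1+k/(s-1)}\geq (P_2\cdots P_s)^{-1+k/(s-1)}$, sum over at most $P_1$ choices of $x_1$, and take $\eps=\tau/(2(s-1))$, which gives the same $\delta_1=k\tau/(2(s-1))$. The only cosmetic difference is that the paper gets $N\leq sP_s^k$ simply from the existence of a solution rather than from the setup reduction, so that step is less of an ``obstacle'' than you suggest.
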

 \begin{proof}
  Put $T:=P_2\cdots P_s$. For each fixed $x_1$, the remaining variables satisfy
  \[ x_2^k+\cdots+x_s^k=N-x_1^k. \]
  We apply Lemma \ref{weighted-tail} with $s-1$ in place of $s$. Every solution $x_2^k+\cdots + x_s^k = m$ with $1\leq x_i\leq P_i$ ($2\leq i\leq s$) contributes $(x_2\cdots x_s)^{-1+k/(s-1)} \geq T^{-1+k/(s-1)}$ to the weighted sum, because $-1+k/(s-1)<0$. Hence, for $m=N-x_1^k$,
  \[ \#\{x_2^k+\cdots+x_s^k=N-x_1^k : 1\leq x_i\leq P_i\ (2\leq i\leq s)\} \ll_\eps T^{1-k/(s-1)}N^\eps. \]
  As there are at most $P_1=P/T$ choices for $x_1$, we get
  \[ \mathrm{Root}(P_1,\ldots,P_s;N) \ll_\eps P T^{-k/(s-1)}N^\eps. \]
  If there are no solutions, there is nothing to prove. Otherwise, $N\leq sP_s^k\leq sP^k$, and hence $N^\eps\ll_s P^{k\eps}$. Thus, as $T = P/P_1 \geq P^{(s-1)/s+\tau}$,
  \[ \mathrm{Root}(P_1,\ldots,P_s;N) \ll_\eps P^{1-k/s-k\tau/(s-1)+k\eps}. \]
  Taking $\eps = \frac{\tau}{2(s-1)}$, we get
  \[ \mathrm{Root}(P_1,\ldots,P_s;N) \ll P^{1-k/s-\delta_1}, \]
  where we may take $\delta_1=\frac{k\tau}{2(s-1)}=\frac{1}{24s^2(s-1)}$.
 \end{proof}

 We finish by combining the balanced and unbalanced estimates.

 \begin{proof}[Proof of Theorem \ref{main-thm}]
  If $P_1\leq P^{1/s-\tau}$, Proposition \ref{unbalanced-prop} gives the required estimate. Otherwise $P_1>P^{1/s-\tau}$. Since $P\geq P_1^{s-1}P_s$, we have
  \[ P_s\leq P^{1/s+(s-1)\tau}. \]
  Since $\tau=\lambda/s^2$,
  \[ P^{1/s-\tau}\geq (P^{1/s+(s-1)\tau})^{1-\lambda}. \]
  Hence $P_1\geq P_s^{1-\lambda}=X^{1-\lambda}$, and Proposition \ref{balanced-box} applies.
 \end{proof}
 
 \begin{rem}
  We have stated the result with Wooley's explicit exponent because this gives a clean numerical improvement of Vu's threshold. The proof, however, shows more generally that any improvement in the available Hua-type mean value estimate for $\sum_{x \leq X}e(\alpha x^k)$ immediately improves the threshold in Vu's restricted box lemma.
 \end{rem}

%%%%%%%%%%%%%%%%%%%%%%
\addtocontents{toc}{\protect\setcounter{tocdepth}{0}}
\section*{Acknowledgements}
 The author acknowledges the support of the São Paulo Research Foundation (FAPESP), Brazil, Process No.~2025/15961-3.
 
\addtocontents{toc}{\protect\setcounter{tocdepth}{1}}
%%%%%%%%%%%%%%%%%%%%%%

% ----------------------------------------------------------------
\bibliographystyle{amsplain}
\bibliography{$HOME/Academie/Recherche/_latex/bibliotheca}%
\end{document}